\documentclass[a4paper,11pt]{article}

\usepackage[T1]{fontenc}
\usepackage[english]{babel}
\usepackage[top=25mm,bottom=25mm,left=27mm,right=27mm,headheight=16pt]{geometry}
\usepackage{amsmath,amssymb,amsthm}
\usepackage{lmodern}
\usepackage{microtype}
\usepackage{enumitem}
\usepackage{fancyhdr}
\usepackage{hyperref}
\usepackage{xurl}
\usepackage{color}

\setlist[enumerate]{leftmargin=2.2em,itemsep=0.25em,topsep=0.4em}
\allowdisplaybreaks
\numberwithin{equation}{section}

\newtheorem{theorem}{Theorem}[section]
\newtheorem{lemma}[theorem]{Lemma}
\newtheorem{corollary}[theorem]{Corollary}
\newtheorem{proposition}[theorem]{Proposition}
\theoremstyle{definition}
\newtheorem{definition}[theorem]{Definition}
\theoremstyle{remark}
\newtheorem{remark}[theorem]{Remark}

\newcommand{\F}{\mathbb{F}}
\newcommand{\ord}{\operatorname{ord}}

\title{\vspace{-1.2cm}\textbf{\boldmath Cubic AGM graphs and Hessian $3$-isogenies\\
  over finite fields $\F_q$ of odd characteristic\\
  with $q\equiv2\pmod3$}}
\author{%
  \parbox{\dimexpr\textwidth-2\tabcolsep\relax}{\centering
    {\large Yuji Hashimoto\textsuperscript{123}\qquad
    Koji Nuida\textsuperscript{23}}\\[0.6em]
    \normalsize
    \textsuperscript{1} School of Science and Engineering, Tokyo Denki University, Japan\\
    \texttt{(y.hashimoto@mail.dendai.ac.jp)}\\[0.25em]
    \textsuperscript{2} Institute of Mathematics for Industry (IMI), Kyushu University, Japan\\
    \texttt{(nuida@imi.kyushu-u.ac.jp)}\\[0.25em]
    \textsuperscript{3} Cyber Physical Security Research Institute (CPSEC),\\
    National Institute of Advanced Industrial Science and Technology (AIST), Japan
  }%
}

\date{\today}

\begin{document}
\maketitle

\begin{abstract}
Let $\F_q$ be a finite field of odd characteristic with $q\equiv2\pmod3$.
We study the directed graph defined by the Borwein--Borwein cubic arithmetic--geometric mean (AGM) over $\F_q$.
We prove that this graph is a disjoint union of directed cycles.
We associate Hessian curves with this AGM.
We also show that each edge corresponds to a $3$-isogeny defined over $\F_q$
between the curves associated with its initial and terminal vertices.
We then use a counting formula for Hessian curves to derive a lower bound for the number of cycles.
\end{abstract}

\section{Introduction}

\subsection{AGM over finite fields and its generalization}
The classical arithmetic-geometric mean (AGM) is considered on positive real numbers and is defined by $u_{i+1} = (u_i + v_i) / 2$ and $v_{i+1} = \sqrt{ u_i v_i }$.
Recently, Griffin--Ono--Saikia--Tsai \cite{GOST23} studied the AGM on finite fields $\mathbb{F}_q$ with $q \equiv 3 \pmod{4}$ and with characteristic $p \geq 7$.
They showed that the corresponding directed graph with vertices $(u_i,v_i)$ and edges $(u_i,v_i) \to (u_{i+1},v_{i+1})$, called the AGM graph, has an interesting shape named a jellyfish.
They also showed, more remarkably, that the edges of the AGM graph correspond naturally to $2$-isogenies between Legendre curves associated to the vertices of the AGM graph.
After their work, there have been subsequent studies on the AGM graph.
Kayath et al. \cite{KLNN25} considered the case of odd characteristic including the case $q \equiv 1 \pmod{4}$.
They exploited the relation with $2$-isogenies between Legendre curves mentioned above to study the structure and the number of connected components of the AGM graph.
The case $q \equiv 5 \pmod{8}$ has also been studied \cite{BG25,Vargas25}.
Moreover, for primes $p \equiv 7 \pmod{12}$, León--Muñoz \cite{LM25} derived nonlinear recurrences from chains of $2$-isogenies between supersingular elliptic curves over $\mathbb{F}_{p^2}$, and related these recurrences to the AGM.

On the other hand, a higher-degree analogue of the AGM on positive real numbers has been introduced by Borwein--Borwein \cite{BB91}.
For example, in the cubic case (instead of the classical square case), they defined
\begin{equation}
u_{i+1} = \frac{ u_i + 2 v_i }{ 3 }, \qquad
v_{i+1} = \sqrt[3]{ \frac{ v_i (u_i^2 + u_i v_i + v_i^2) }{ 3 } }\label{cubic_agm_eq}.
\end{equation}
Now we mention that their definition of such a higher-degree AGM is motivated by the theory of hypergeometric series, and whether (and how) such a higher-degree AGM can be related to the theory of elliptic curves (analogously to the relation of the quadratic AGM and Legendre curves explained above) has not been known.
In this paper, we investigate for the first time such a relation in the case of cubic AGM.
\subsection{Contributions}
In this paper, we study the Borwein--Borwein cubic AGM mentioned above for the case of finite fields.

We assume that $q=p^m\equiv2\pmod3$, where $p\ge5$ is prime and $m\ge1$ is an integer.
Every element of $\F_q$ has a unique cube root in $\F_q$.
The equations (\ref{cubic_agm_eq}) above therefore determines a unique next pair.
Set
\[
 V_{\mathrm{AGM}}
 :=\{(u,v)\in\F_q^2\mid v\ne0,\ u^3\ne v^3\}.
\]
Let $G_{\mathrm{AGM}}$ be the directed graph (digraph) with this vertex set and the update edges $(u_i, v_i) \to (u_{i+1}, v_{i+1})$.
For each vertex $(u,v)$, set $\rho:=u/v$ and associate the Hessian curve
\[
 H_{\rho}:\quad X^3+Y^3+Z^3=3\rho XYZ.
\]
The condition $u^3\ne v^3$ is equivalent to the nonsingularity of $H_{\rho}$.

Our main contributions are as follows.
\begin{enumerate}
\item \textbf{Cycle structure and $3$-isogenies.}

      The cubic AGM update $(u_i, v_i) \mapsto (u_{i+1}, v_{i+1})$ is a bijection on $V_{\mathrm{AGM}}$.
      Thus $G_{\mathrm{AGM}}$ is a disjoint union of directed cycles
      (Theorem~\ref{uni_parent_theo} and Corollary~\ref{agm_cyc_coro}).
      We use the formula of Perez Broon et al.\ for $3$-isogenies between twisted Hessian curves
      \cite[Theorem~3]{BDFM21}.
      This realizes each update edge as a $3$-isogeny over $\F_q$
      between the corresponding Hessian curves (Theorem~\ref{three_iso_agm_theo}).
\item \textbf{The quotient graph and cycle lifting.}

      Identifying vertices with the same ratio $\rho=u/v$ gives the quotient graph $G_{\rho}$ of $G_{\mathrm{AGM}}$
      , called the $\rho$-graph.
      The projection $\pi:G_{\mathrm{AGM}}\to G_{\rho}$, $\pi(u,v)=u/v$,
      is a $(q-1)$-fold covering of directed graphs.
      Let $C$ be a cycle of length $L$ in $G_{\rho}$.
      After $L$ cubic AGM updates along $C$, the second coordinate is multiplied
      by some $\kappa_C\in\F_q^\times$.
      Write $m_C$ for its multiplicative order.
      Then $\pi^{-1}(C)$ is a disjoint union of $(q-1)/m_C$ cycles,
      each of length $Lm_C$ (Theorem~\ref{rho_cover_theo}).
\item \textbf{A lower bound for the number of cycles.}

      We prove that the number $d_3(q)$ of cycles in $G_{\mathrm{AGM}}$ satisfies
      \[
       d_3(q)\ge\frac{16}{15}\sqrt q-1
      \]
      (Theorem~\ref{cyc_low_theo}).
      This bound follows from the correspondence with Hessian curves
      and the counting formula of Moody--Wu \cite{MW11} (Proposition~\ref{MW_prop}).
\end{enumerate}

\section{Preliminaries}

Let $\F_q$ be a finite field of characteristic $p\ge5$.
Write $\overline{\F}_q$ for its algebraic closure.

\subsection{Elliptic curves and isogenies}
We recall basic facts about elliptic curves and isogenies.
For details, see \cite{Galbraith12,DeFeo17,Silverman09}.

\begin{definition}[{\cite[III, Sections 1--3]{Silverman09}}]
Let $a,b\in\F_q$ satisfy $4a^3+27b^2\ne0$.
The nonsingular curve
\[
 E:\quad y^2=x^3+ax+b,
\]
together with its point at infinity $O_E$, is an elliptic curve over $\F_q$.
Its set of $\F_q$-rational points is
\[
 E(\F_q)=\{(x,y)\in\F_q^2\mid y^2=x^3+ax+b\}\cup\{O_E\}.
\]
This set is a finite abelian group under addition, with identity $O_E$.
A curve isomorphic to this curve over $\F_q$ is also called an elliptic curve.
Its identity is the point corresponding to $O_E$.
\end{definition}

\begin{definition}[{\cite[Theorem~7, Definition~24]{DeFeo17}}]
Let $E$ and $E'$ be elliptic curves over $\F_q$.
An isogeny over $\F_q$ is a nonconstant morphism of curves $\phi:E\to E'$
defined over $\F_q$ such that $\phi(O_E)=O_{E'}$.
It induces a surjective group homomorphism from $E(\overline{\F}_q)$ to $E'(\overline{\F}_q)$.
Its kernel is the finite subgroup
\[
 \ker\phi:=\{P\in E(\overline{\F}_q)\mid\phi(P)=O_{E'}\}.
\]

Let $\F_q(E)$ and $\F_q(E')$ be the respective function fields.
The pullback
\[
 \phi^*:\F_q(E')\hookrightarrow\F_q(E),\qquad
 g\longmapsto g\circ\phi
\]
makes $\F_q(E)$ a finite extension of $\phi^*(\F_q(E'))$.
The degree of $\phi$ is
\[
 \deg\phi:=[\F_q(E):\phi^*(\F_q(E'))].
\]
The isogeny is separable if this field extension is separable.
In this case, $\deg\phi=\#\ker\phi$.
\end{definition}

\begin{definition}[{\cite[Section 25.1]{Galbraith12}}]
Let $E$ and $E'$ be elliptic curves over $\F_q$, and let $\ell\ne p$ be prime.
An isogeny $\phi:E\to E'$ over $\F_q$ is called an $\ell$-isogeny if $\deg\phi=\ell$.
\end{definition}

\subsection{Twisted Hessian curves and Hessian curves}
We recall Hessian curves.
For details, see \cite{BCKL15,BDFM21,MW11}.

\begin{samepage}
\begin{definition}[{\cite[Definition~1]{BDFM21}}]
Let $a,d\in\F_q$ satisfy $a(27a-d^3)\ne0$.
The projective plane curve
\begin{equation}
 H(a,d):\quad aX^3+Y^3+Z^3=dXYZ
\end{equation}
is nonsingular and is an elliptic curve with identity $O:=(0:-1:1)$.
It is called a twisted Hessian curve.
When $a=1$, it is called a Hessian curve.
\end{definition}
\end{samepage}

\begin{proposition}[{\cite[Section 2]{BCKL15}}]
\label{hess_isom_prop}
Let $H(a,d)$ be a twisted Hessian curve over $\F_q$, and let $c\in\overline{\F}_q$ satisfy $c^3=a$.
Then
\begin{equation}
 \nu_c:H(a,d)\longrightarrow H\!\left(1,\frac dc\right),\qquad
 (X:Y:Z)\longmapsto(cX:Y:Z)
\end{equation}
is an isomorphism over $\F_q(c)$ that preserves the identity.
If $q\equiv2\pmod3$, there is a unique $c\in\F_q$ with $c^3=a$.
In this case, $H(a,d)$ and $H(1,d/c)$ are isomorphic over $\F_q$.
\end{proposition}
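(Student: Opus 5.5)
We first check that $\nu_c$ maps $H(a,d)$ into $H(1,d/c)$. Take a point $(X:Y:Z)$ on $H(a,d)$ and put $X'=cX$. Then
\[
 X'^3+Y^3+Z^3=c^3X^3+Y^3+Z^3=aX^3+Y^3+Z^3=dXYZ=\frac dc\,X'YZ .
\]
So the image satisfies the equation of $H(1,d/c)$. The map is a linear change of projective coordinates given by the matrix $\mathrm{diag}(c,1,1)$, whose entries lie in $\F_q(c)$. Since $a\ne0$ we have $c\ne0$, so the map has the inverse $(X:Y:Z)\mapsto(c^{-1}X:Y:Z)$, also defined over $\F_q(c)$. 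By the same computation read backwards, this inverse sends $H(1,d/c)$ back into $H(a,d)$.

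Next we confirm that the target really is a Hessian curve in the sense of the definition. We need $1\cdot(27-(d/c)^3)\ne0$. Now $27-d^3/c^3=(27a-d^3)/a$, and this is nonzero because $a(27a-d^3)\ne0$. Thus $\nu_c$ is a biregular map between two nonsingular projective curves, i.e. an isomorphism over $\F_q(c)$. It preserves the identity because $O=(0:-1:1)$ has vanishing $X$-coordinate, so $\nu_c(O)=(0:-1:1)$.

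For the last claim, the key input is that $q\equiv2\pmod3$ gives $\gcd(3,q-1)=1$. Hence $x\mapsto x^3$ is a homomorphism of the cyclic group $\F_q^\times$ with trivial kernel, so it is a bijection. It also fixes $0$. So for every $a\in\F_q$ there is exactly one $c\in\F_q$ with $c^3=a$. Since this $c$ lies in $\F_q$, we have $\F_q(c)=\F_q$, and $\nu_c$ is an isomorphism over $\F_q$. No step is a real obstacle; the only point needing care is checking the nonsingularity condition for the target curve.
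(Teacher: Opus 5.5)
Your proof is correct and complete. The substitution $X'=cX$, the invertibility of $\mathrm{diag}(c,1,1)$ (using $c\neq0$ from $a\neq0$), the check $27-(d/c)^3=(27a-d^3)/a\neq0$, the fixed identity $(0:-1:1)$, and the bijectivity of cubing on $\F_q$ from $3\nmid q-1$ are exactly what is needed. The paper gives no proof of its own here and simply cites \cite[Section~2]{BCKL15}, so your direct verification is the standard argument that the citation stands in for.
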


\subsection{3-isogenies and Hessian curves}
We recall $3$-isogenies between Hessian curves.
For details, see \cite{BDFM21,MW11}.

\begin{proposition}[{\cite[Theorem~3]{BDFM21}}]
\label{three_iso_prop}
Let $H(a,d)$ be a twisted Hessian curve over $\F_q$, and suppose that $c\in\F_q$ satisfies $c^3=a$.
Put
\[
 \begin{aligned}
 A&:=d^2c+3dc^2+9a,\qquad D:=d+6c,\\
 F_0&:=XYZ,\\
 F_1&:=c^2X^2Z+cXY^2+YZ^2,\\
 F_2&:=c^2X^2Y+cXZ^2+Y^2Z.
 \end{aligned}
\]
Then $H(A,D)$ is a twisted Hessian curve, and the map
\[
 \phi_{a,d,c}:H(a,d)\longrightarrow H(A,D),\qquad
 (X:Y:Z)\longmapsto(F_0:F_1:F_2)
\]
is a $3$-isogeny with kernel
\[
 \ker\phi_{a,d,c}
 =\langle(1:-c:0)\rangle
 =\{(0:-1:1),(1:-c:0),(1:0:-c)\}.
\]
\end{proposition}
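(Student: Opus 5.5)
The plan is a direct verification of the three assertions of Proposition~\ref{three_iso_prop}: $H(A,D)$ is nonsingular, $\phi_{a,d,c}$ lands on $H(A,D)$, and the isogeny has degree $3$ with the stated kernel. Throughout I use the factorization
\[
 aX^3+Y^3+Z^3-dXYZ=\prod_{\omega^3=1}\bigl(cX+\omega Y+\omega^2 Z\bigr)+(3c-d)XYZ,
\]
which holds because $c^3=a$. Over $\overline{\F}_q$ the norm form $\prod_\omega(cX+\omega Y+\omega^2Z)$ equals $c^3X^3+Y^3+Z^3-3cXYZ$, so the cubic depends only on the linear forms $cX$, $Y$, $Z$. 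Any identity I derive can therefore be checked after substituting $X\mapsto X/c$. This reduces everything to the case $c=1$, with $a=1$ and $d$ replaced by $d/c$, consistent with Proposition~\ref{hess_isom_prop}.

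\textbf{Nonsingularity of $H(A,D)$.} We need $A\neq 0$ and $27A-D^3\neq0$. Write $d=ct$. Then $A=c^3(t^2+3t+9)$ and $D=c(t+6)$, so
\[
 27A-D^3=c^3\bigl(27t^2+81t+243-(t+6)^3\bigr)=-c^3(t-3)^3.
\]
This vanishes only if $d=3c$, i.e.\ $d^3=27a$, which is excluded. For $A\neq0$, note that $t^2+3t+9=(t-3\zeta)(t-3\zeta^2)$ with $\zeta$ a primitive cube root of unity. Hence $A=0$ means $d=3c\zeta^{\pm1}$, which again gives $d^3=27a$, excluded. So $H(A,D)$ is a twisted Hessian curve, with no appeal to $q\equiv 2\pmod 3$ needed.

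\textbf{The image lies on $H(A,D)$.} I must show that $AF_0^3+F_1^3+F_2^3-DF_0F_1F_2$ is divisible by $aX^3+Y^3+Z^3-dXYZ$. This is a degree-$9$ polynomial identity. I would organize it through the norm factorization: $F_1$ and $F_2$ are built from the monomials $cX$, $Y$, $Z$ cyclically, so $F_1+\omega F_2$ should factor as a product of the linear forms $\ell_\omega=cX+\omega Y+\omega^2Z$ up to terms involving $XYZ$. Then $\prod_\omega(AF_0\cdot c^{-1}\cdots)$ can be compared with the curve equation. In practice I would expect this to be the main obstacle; the cleanest route is to reduce to $c=1$ and verify the identity modulo the curve equation symbolically in $t$. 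Concretely, I would eliminate $X^3$ using $X^3=tXYZ-Y^3-Z^3$ and check that the remainder vanishes.

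\textbf{Kernel and degree.} Let $P$ be one of the three listed points. First check that these points lie on $H(a,d)$: for example $a+(-c)^3=0$. At such points $F_0=F_1=F_2=0$ formally, so $\phi$ must be evaluated by an alternative representative. Alternatively, I would show directly that the three points form the subgroup generated by $(1:-c:0)$, using the standard Hessian addition law; here $-(X:Y:Z)=(X:Z:Y)$, so $(1:-c:0)$ and $(1:0:-c)$ are inverse to each other. I would then check that $\phi$ sends these points to the identity $(0:-1:1)$ of $H(A,D)$. Near $(1:-c:0)$, I would parametrize and compute the limit, aiming for $F_0:F_1:F_2\to 0:-1:1$.

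Next, $\phi(O)=O'$, which is checked by the same kind of limiting computation. Since $\phi$ is a nonconstant morphism preserving the identity, it is an isogeny. For the degree, a generic fibre can be computed: the map is given by cubics, and the linear system $|F_0,F_1,F_2|$ on the cubic has base points exactly at the three kernel points. The degree is therefore $9-\#\text{base multiplicities}$. With three simple base points this gives $9-3\cdot 2=3$, provided each base point has multiplicity $2$, i.e.\ the $F_i$ are tangent there. Since the kernel has order $3$ and $3\neq p$, this gives separability and $\deg\phi=3$, as claimed.
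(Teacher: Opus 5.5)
The paper gives no proof of this proposition; it imports it from \cite[Theorem~3]{BDFM21}. Your self-contained verification is therefore a different route. Your nonsingularity computation is correct: $A=c^3(t^2+3t+9)$ and $27A-D^3=-c^3(t-3)^3$. The step showing that $\phi$ lands on $H(A,D)$ is the heart of the statement, and you only announce it. It does hold, and symmetric functions make it short. Substitute $X\mapsto X/c$, put $t=d/c$, and let $e_1,e_2,e_3$ be the elementary symmetric polynomials in $X,Y,Z$. One needs $(t^2+3t+9)e_3^3+F_1^3+F_2^3-(t+6)e_3F_1F_2=0$ whenever $e_1^3-3e_1e_2=(t-3)e_3$. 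This follows from $F_1+F_2=e_1e_2-3e_3$, $F_1F_2=e_2^3-6e_1e_2e_3+e_1^3e_3+9e_3^2$ and $F_1^3+F_2^3=(F_1+F_2)^3-3F_1F_2(F_1+F_2)$.

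The genuine gap is in your kernel/degree paragraph, which rests on a false premise: the $F_i$ do \emph{not} vanish at the kernel points. Direct evaluation gives $\phi(0:-1:1)=(0:-1:1)$, $\phi(1:-c:0)=(0:c^3:-c^3)$ and $\phi(1:0:-c)=(0:-c^3:c^3)$. All three equal the identity $(0:-1:1)$ of $H(A,D)$, so no limits are needed.

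In fact $F_0,F_1,F_2$ have no common zero on $H(a,d)$. A common zero needs $XYZ=0$. On $X=0$ the pair $F_1,F_2$ becomes $YZ^2,Y^2Z$; on $Y=0$ it becomes $c^2X^2Z,\,cXZ^2$; on $Z=0$ it becomes $cXY^2,\,c^2X^2Y$. In each case a second coordinate must vanish, and then the curve equation (with $a\neq0$) kills the third.

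Your count is also misapplied. The moving part of the divisor cut by $\sum\lambda_iF_i$ has degree $9-b$, where $b$ is the total base multiplicity. It equals the pullback of a line section of the plane cubic $H(A,D)$, so $9-b=3\deg\phi$. Base multiplicity $6$ would therefore give $\deg\phi=1$, not $3$. The correct conclusion is $b=0$, hence $\deg\phi=3$. Since $p\neq3$, the inseparable degree (a power of $p$ dividing $3$) is $1$, so $\phi$ is separable and $\#\ker\phi=3$.

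You only showed that the three points lie in the kernel. Equality needs this count, or a direct check: $\phi(P)=(0:-1:1)$ forces $XYZ=0$, and the nine such points map as $(0:y:1)\mapsto(0:1:y)$, $(1:0:z)\mapsto(0:c:z)$, $(1:y:0)\mapsto(0:y:c)$. These images equal $(0:-1:1)$ only for $y=-1$, $z=-c$, $y=-c$ respectively, i.e.\ exactly the three listed points.
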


\begin{samepage}
\begin{proposition}[{\cite[Theorem~4.2]{MW11}}]
\label{MW_prop}
Suppose that $q\equiv2\pmod3$.
The number of isogeny classes of Hessian curves over $\F_q$ is
\begin{equation}
 M_q^{\mathrm{Hess}}
 =1+2\left\lfloor\frac{2\sqrt q}{3}\right\rfloor
    -2\left\lfloor\frac{2\sqrt q}{3p}\right\rfloor.
 \label{count_eq}
\end{equation}
\end{proposition}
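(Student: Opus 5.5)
The plan is to turn the count of isogeny classes of Hessian curves into a count of admissible Frobenius traces, using Tate's isogeny theorem and Waterhouse's classification.

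\textbf{Step 1: characterize Hessian curves.} Since $q\equiv2\pmod3$, every $H_\rho$ contains the $\F_q$-rational points $(1:-1:0)$, $(1:0:-1)$ and $(0:1:-1)$. These points, together with the structure of the $3$-torsion on a Hessian curve, give a rational subgroup of order $3$, so $3\mid\#H_\rho(\F_q)$. Conversely, I would invoke the standard fact (Moody--Wu, Farashahi--Joye) that for $q\equiv2\pmod3$ an elliptic curve over $\F_q$ is $\F_q$-isomorphic to a Hessian curve exactly when it has an $\F_q$-rational point of order $3$. The construction behind this is to send the rational $3$-torsion point to a flex and use the unique cube roots in $\F_q$ to normalize the equation, as in Proposition~\ref{hess_isom_prop}. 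By Cauchy's theorem, having such a point is equivalent to $3\mid\#E(\F_q)=q+1-t$. Since $q+1\equiv0\pmod3$, this condition is simply $3\mid t$.

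\textbf{Step 2: pass to isogeny classes.} By Tate's theorem, the $\F_q$-isogeny class of $E$ is determined by its trace $t$. The condition $3\mid t$ depends only on the class. So if one curve in a class is Hessian, every curve in it is. Hence $M_q^{\mathrm{Hess}}$ equals the number of traces $t$ that occur for some elliptic curve over $\F_q$ and satisfy $3\mid t$.

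\textbf{Step 3: apply Waterhouse.} The realizable traces are as follows.
\begin{enumerate}
\item All $t$ with $|t|\le2\sqrt q$ and $p\nmid t$.
\item Supersingular values with $p\mid t$. For $m$ odd these are $t=0$, together with $t=\pm\sqrt{2q}$ and $t=\pm\sqrt{3q}$, which occur only for $p=2,3$ and are excluded here. For $m$ even they are $t=\pm2\sqrt q,\pm\sqrt q$ and $0$.
\end{enumerate}
Now $q=p^m\equiv2\pmod3$ forces $p\equiv2\pmod3$ and $m$ odd. Thus the only admissible trace divisible by $p$ is $t=0$. Moreover $2\sqrt q$ is irrational, so no boundary issues arise.

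\textbf{Step 4: count.} The multiples of $3$ in $[-2\sqrt q,2\sqrt q]$ number $1+2\lfloor 2\sqrt q/3\rfloor$. From these I remove the nonzero multiples of $3p$, which are multiples of $3$ divisible by $p$ and therefore excluded by Step 3. There are $2\lfloor 2\sqrt q/(3p)\rfloor$ of them. The value $t=0$ is kept, because it is realized as a supersingular trace. This gives \eqref{count_eq}.

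The main obstacle is the converse in Step 1: showing that every curve with a rational $3$-torsion point is $\F_q$-isomorphic to an untwisted Hessian curve, not merely to a twisted one. I would handle this by first reaching a twisted Hessian form $H(a,d)$ and then applying Proposition~\ref{hess_isom_prop} with the unique cube root $c$ of $a$ in $\F_q$.
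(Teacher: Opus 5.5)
Your argument is correct. The paper gives no proof of this proposition: it quotes \cite[Theorem~4.2]{MW11} and uses it only as a black box in Lemma~\ref{cyc_count_lem}. So you are not taking a different route from the paper; you are supplying the derivation it omits, and it is the natural one.

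Your chain of reductions is sound. Over $\F_q$ with $q\equiv2\pmod3$, being $\F_q$-isomorphic to a Hessian curve is equivalent to having a rational point of order $3$, and hence to $3\mid t$. Tate's theorem then turns isogeny classes into traces. Waterhouse's theorem, together with your observation that $q\equiv2\pmod3$ forces $p\equiv2\pmod3$ and $m$ odd, leaves $t=0$ as the only admissible trace divisible by $p$. This accounts exactly for the leading $1$ and for removing the $2\lfloor 2\sqrt q/(3p)\rfloor$ nonzero multiples of $3p$.

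What your version adds over the bare citation is that it isolates what Lemma~\ref{cyc_count_lem} actually uses: every isogeny class with $3\mid t$ contains some $H(1,d)$.

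The one input you still outsource is the converse in Step~1. Your proposed detour through twisted Hessian forms does not remove that dependence; it only moves it to another normal-form theorem from \cite{BCKL15}. The converse can be done directly, as follows.
\begin{enumerate}
\item Write $(E,P)$ as $y^2+a_1xy+a_3y=x^3$ with $P=(0,0)$. Since every element of $\F_q$ is a cube, the pair is determined up to $\F_q$-isomorphism by $s=a_1^3/a_3\in\F_q\setminus\{27\}$.
\item Send $O=(0:-1:1)$ to infinity and $(1:-1:0)$ to $(0,0)$, using the flex tangents $dX+3Y+3Z=0$ and $3X+3Y+dZ=0$. This shows that $H(1,d)$ has $s=(d+6)^3/(d^2+3d+9)$.
\item Put $w=1/(d-3)$. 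Then $s-27=1/\bigl(27(w+1/9)^3-1/27\bigr)$.
\item Because cubing is bijective on $\F_q$, this is a bijection from $\F_q\setminus\{3\}$ onto $\F_q\setminus\{27\}$.
\end{enumerate}
This gives the converse using only the unique cube roots that the paper already exploits throughout.
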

\end{samepage}
\section{Cubic AGM graphs over finite fields and their properties}

We define the cubic AGM graph and describe its cycle structure.

From now on, assume that $p\ge5$ and $q=p^m\equiv2\pmod3$.

\begin{definition}[Cubic AGM graph over $\F_q$]
Let
\[
 V_{\mathrm{AGM}}
 :=\{(u,v)\in\F_q^2\mid v\ne0,\ u^3\ne v^3\}.
\]
For two vertices $(u_i,v_i)$ and $(u_{i+1},v_{i+1})$,
consider the equations
\[
 u_{i+1}=\frac{u_i+2v_i}{3},
 \qquad
 v_{i+1}^3=\frac{v_i(u_i^2+u_iv_i+v_i^2)}{3}.
\]
We call these the cubic AGM update equations over $\F_q$.
We draw a directed edge
$(u_i,v_i)\to(u_{i+1},v_{i+1})$ if these equations hold.
Let $E_{\mathrm{AGM}}$ be the set of these edges.

The digraph
$G_{\mathrm{AGM}}:=(V_{\mathrm{AGM}},E_{\mathrm{AGM}})$
is called the cubic AGM graph over $\F_q$.
Its vertices and simple directed cycles are called
AGM vertices and AGM cycles, respectively.
Each edge is called an AGM update edge.
Its initial vertex is called a parent of its terminal vertex,
and its terminal vertex is called a child of its initial vertex.
\end{definition}

The existence and uniqueness of cube roots imply that each $(u_i,v_i)\in V_{\mathrm{AGM}}$
determines a unique pair $(u_{i+1},v_{i+1})\in\F_q^2$ satisfying the cubic AGM update equations.
Moreover, since $(u_i-v_i)(u_i^2+u_iv_i+v_i^2)=u_i^3-v_i^3\ne0$, we have
\begin{equation}
 \begin{aligned}
 v_{i+1}^3&=\frac{v_i(u_i^2+u_iv_i+v_i^2)}{3}\ne0,\\
 u_{i+1}^3-v_{i+1}^3&=\frac{(u_i-v_i)^3}{27}\ne0.
 \end{aligned}
 \label{agm_cond_eq}
\end{equation}
Thus the resulting pair also belongs to $V_{\mathrm{AGM}}$.
Each vertex therefore has a unique child.
This defines the cubic AGM update map
\[
 U:V_{\mathrm{AGM}}\longrightarrow V_{\mathrm{AGM}},
 \qquad
 U(u_i,v_i):=(u_{i+1},v_{i+1}),
\]
where $(u_{i+1},v_{i+1})$ is the unique pair satisfying
the cubic AGM update equations.

\begin{theorem}[Uniqueness of the parent]
\label{uni_parent_theo}
Each $(u_{i+1},v_{i+1})\in V_{\mathrm{AGM}}$ has a unique parent.
More explicitly, let $\xi_i\in\F_q$ be the unique element satisfying
$\xi_i^3=v_{i+1}^3-u_{i+1}^3$. Then the parent is given by
\begin{equation}
 (u_i,v_i)=(u_{i+1}-2\xi_i,u_{i+1}+\xi_i).
 \label{parent_formula_eq}
\end{equation}
\end{theorem}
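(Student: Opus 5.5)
Given a child $(u',v')\in V_{\mathrm{AGM}}$, we determine all $(u,v)\in V_{\mathrm{AGM}}$ with $U(u,v)=(u',v')$. We then check that the formula \eqref{parent_formula_eq} yields an element of $V_{\mathrm{AGM}}$ that is in fact such a parent.

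For uniqueness, suppose $U(u,v)=(u',v')$. We introduce the auxiliary quantity $\xi:=(v-u)/3$. The first update equation gives $3u'=u+2v$, so $u'=u+2\xi$ and $v=u+3\xi=u'+\xi$. Hence $u=u'-2\xi$ and $v=u'+\xi$, and $(u,v)$ is determined by $u'$ and $\xi$. It remains to pin down $\xi$. By \eqref{agm_cond_eq}, $u'^3-v'^3=(u-v)^3/27=-\xi^3$, so $\xi^3=v'^3-u'^3$. Since $q\equiv2\pmod3$, cubing is a bijection on $\F_q$, so $\xi$ is the unique element $\xi_i$ of the statement. This proves that there is at most one parent and that it must be given by \eqref{parent_formula_eq}.

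For existence, set $\xi:=\xi_i$, $u:=u'-2\xi$ and $v:=u'+\xi$. We must check that $(u,v)\in V_{\mathrm{AGM}}$ and that $U(u,v)=(u',v')$.
\begin{enumerate}
\item The first update equation holds: $(u+2v)/3=(3u')/3=u'$.
\item For the second update equation, we expand
\[
v(u^2+uv+v^2)/3=(u'+\xi)\bigl((u'-2\xi)^2+(u'-2\xi)(u'+\xi)+(u'+\xi)^2\bigr)/3 .
\]
The bracket equals $3u'^2-3u'\xi+3\xi^2$. Hence the right-hand side is $(u'+\xi)(u'^2-u'\xi+\xi^2)=u'^3+\xi^3=v'^3$. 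Since cube roots are unique, the second coordinate of $U(u,v)$ is exactly $v'$.
\item The product $v(u^2+uv+v^2)=3v'^3$ is nonzero because $v'\ne0$, so $v\ne0$.
\item We have $u-v=-3\xi$, and $\xi\ne0$ because $u'^3\ne v'^3$. Also $u^2+uv+v^2\ne0$ by the previous item. Hence $u^3-v^3=(u-v)(u^2+uv+v^2)\ne0$.
\end{enumerate}
Thus $(u,v)\in V_{\mathrm{AGM}}$ and it is a parent of $(u',v')$.

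The only step that needs care is the polynomial identity in the second item; everything else is immediate. The conclusion is that $U$ is a bijection of the finite set $V_{\mathrm{AGM}}$.
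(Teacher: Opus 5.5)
Your proof is correct and follows essentially the same route as the paper: you get $v-u=3\xi_i$ from \eqref{agm_cond_eq} and uniqueness of cube roots, solve the linear system, and then verify the candidate using the identity $u^2+uv+v^2=3(u'^2-u'\xi+\xi^2)$. The only cosmetic difference is that you obtain $u^3\ne v^3$ from the factorization $(u-v)(u^2+uv+v^2)$ with both factors nonzero, whereas the paper uses injectivity of cubing on $u\ne v$; both work.
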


\begin{proof}
First, suppose that a parent $(u_i,v_i)$ exists. Equation~\eqref{agm_cond_eq} gives
\[
 \left(\frac{v_i-u_i}{3}\right)^3
 =v_{i+1}^3-u_{i+1}^3=\xi_i^3.
\]
The uniqueness of cube roots gives $v_i-u_i=3\xi_i$.
Adding this to the first update equation $u_i+2v_i=3u_{i+1}$ yields
\[
 3v_i=3u_{i+1}+3\xi_i.
\]
The characteristic is not $3$, so we may divide both sides by $3$.
Substituting the result into $u_i=v_i-3\xi_i$ gives
\[
 \begin{aligned}
 v_i&=u_{i+1}+\xi_i,\\
 u_i&=(u_{i+1}+\xi_i)-3\xi_i=u_{i+1}-2\xi_i.
 \end{aligned}
\]
Thus any parent must be given by~\eqref{parent_formula_eq}.

Next, we show that the pair $(u_i,v_i)$ defined by~\eqref{parent_formula_eq} is indeed a parent.
First,
\[
 \frac{u_i+2v_i}{3}
 =\frac{(u_{i+1}-2\xi_i)+2(u_{i+1}+\xi_i)}{3}=u_{i+1},
\]
so the first update equation holds. Also,
\[
 \begin{aligned}
 u_i^2+u_iv_i+v_i^2
 &=(u_{i+1}-2\xi_i)^2+(u_{i+1}-2\xi_i)(u_{i+1}+\xi_i)
   +(u_{i+1}+\xi_i)^2\\
 &=3(u_{i+1}^2-u_{i+1}\xi_i+\xi_i^2),
 \end{aligned}
\]
and hence
\[
 \begin{aligned}
 \frac{v_i(u_i^2+u_iv_i+v_i^2)}{3}
 &=(u_{i+1}+\xi_i)
   (u_{i+1}^2-u_{i+1}\xi_i+\xi_i^2)\\
 &=u_{i+1}^3+\xi_i^3=v_{i+1}^3\ne0.
 \end{aligned}
\]
Thus the second update equation also holds, and $v_i\ne0$.
Moreover, $\xi_i^3=v_{i+1}^3-u_{i+1}^3\ne0$ implies $u_i-v_i=-3\xi_i\ne0$.
The uniqueness of cube roots then gives $u_i^3\ne v_i^3$.
Therefore $(u_i,v_i)\in V_{\mathrm{AGM}}$, and this pair is the required parent.
\end{proof}

\begin{corollary}[Cycle structure of the cubic AGM graph]
\label{agm_cyc_coro}
The graph $G_{\mathrm{AGM}}$ is a disjoint union of directed cycles.
\end{corollary}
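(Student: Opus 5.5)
The plan is to show that the update map $U:V_{\mathrm{AGM}}\to V_{\mathrm{AGM}}$ is a bijection of a finite set, and then use the standard fact that the functional graph of a permutation of a finite set is a disjoint union of directed cycles.

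First, recall from the discussion preceding Theorem~\ref{uni_parent_theo} that every vertex has exactly one child, namely $U(u,v)$. This uses the uniqueness of cube roots in $\F_q$ when $q\equiv2\pmod3$, together with \eqref{agm_cond_eq}, which shows that $U(u,v)\in V_{\mathrm{AGM}}$. So every vertex has out-degree exactly one, and $E_{\mathrm{AGM}}$ is precisely the set of edges $x\to U(x)$. Second, Theorem~\ref{uni_parent_theo} states that every vertex has exactly one parent. Hence $U$ is surjective (existence of a parent) and injective (uniqueness of a parent), so $U$ is a permutation of $V_{\mathrm{AGM}}$. Since $V_{\mathrm{AGM}}\subseteq\F_q^2$, this set is finite.

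Next, decompose $V_{\mathrm{AGM}}$ into the orbits of the cyclic group $\langle U\rangle$. For each $x\in V_{\mathrm{AGM}}$, finiteness gives a repetition $U^i(x)=U^j(x)$ with $i<j$. Injectivity then yields $U^{j-i}(x)=x$. Let $L\ge1$ be the least such exponent. The vertices $x,U(x),\dots,U^{L-1}(x)$ are then distinct, and the edges between them form a simple directed cycle. If two orbits meet, they coincide. Because each vertex has in-degree and out-degree exactly one, no edge joins two different orbits. Therefore $G_{\mathrm{AGM}}$ is the disjoint union of these cycles.

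None of these steps is a real obstacle, since the substantive work is already done in Theorem~\ref{uni_parent_theo}. The only point needing care is the degenerate case $L=1$, a fixed point $U(x)=x$. This gives a loop, which I would count as a directed cycle of length one, consistent with the paper's notion of AGM cycles.
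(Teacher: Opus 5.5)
Your proof is correct and follows the same route as the paper: unique child plus the unique parent from Theorem~\ref{uni_parent_theo} make $U$ a bijection of the finite set $V_{\mathrm{AGM}}$, whose orbits are the disjoint directed cycles. You simply spell out the orbit decomposition and the loop case, which the paper leaves implicit.
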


\begin{proof}
Each AGM vertex has a unique child and, by
Theorem~\ref{uni_parent_theo}, a unique parent.
Thus $U$ is a bijection on the finite set $V_{\mathrm{AGM}}$.
Its orbits form pairwise disjoint directed cycles.
\end{proof}

\section{Cubic AGM and 3-isogenies between Hessian curves}

We relate cubic AGM updates to $3$-isogenies between Hessian curves.

\begin{proposition}[AGM vertices and nonsingular Hessian curves]
\label{hess_cond_theo}
Let $u_i,v_i\in\F_q$ with $v_i\ne0$, and set $d_i:=3u_i/v_i$.
Then $(u_i,v_i)\in V_{\mathrm{AGM}}$ if and only if
the projective plane curve
\[
 X^3+Y^3+Z^3=d_iXYZ
\]
is nonsingular.
\end{proposition}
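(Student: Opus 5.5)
Since $v_i\ne0$ is assumed, membership $(u_i,v_i)\in V_{\mathrm{AGM}}$ is equivalent to $u_i^3\ne v_i^3$, i.e.\ $\rho^3\ne1$ with $\rho:=u_i/v_i$, so $d_i=3\rho$. The plan is to show directly that the curve $X^3+Y^3+Z^3=dXYZ$ is singular exactly when $d^3=27$, and then translate: $d_i^3=27\rho^3$, so $d_i^3=27$ iff $\rho^3=1$ iff $u_i^3=v_i^3$. This is consistent with the nonsingularity criterion $a(27a-d^3)\ne0$ quoted for $H(a,d)$ with $a=1$. If one accepts that criterion from \cite{BDFM21} as stated in both directions, the proof is essentially immediate. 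To be self-contained, however, I would verify the singular locus by hand.

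For the direct computation, set $F:=X^3+Y^3+Z^3-dXYZ$. A singular point satisfies
\[
3X^2=dYZ,\qquad 3Y^2=dXZ,\qquad 3Z^2=dXY.
\]
Since $p\ge5$, we may divide by $3$.

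If $d=0$, the equations force $X=Y=Z=0$, so the curve is nonsingular, and indeed $d^3=0\ne27$.

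If $d\ne0$, multiply the three equations to get $27X^2Y^2Z^2=d^3X^2Y^2Z^2$. Next, check that at a singular point none of the coordinates vanishes. For example, $X=0$ gives $YZ=0$ and then $Y^2=0$ and $Z^2=0$, which is impossible in projective space. Hence $XYZ\ne0$, and a singular point forces $d^3=27$.

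Conversely, if $d^3=27$, then $d=3\omega$ with $\omega^3=1$. Since $q\equiv2\pmod3$, the only cube root of unity in $\F_q$ is $1$, so $d=3$. One checks that $(1:1:1)$ satisfies all three partial derivative equations as well as $F=0$ (because $3-3=0$), so it is a singular point. Even without restricting to $\F_q$, the point $(1:1:\omega^{-1})$-type points work over $\overline{\F}_q$, but this is not needed.

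The only mild subtlety is the meaning of \emph{nonsingular}, i.e.\ whether it is taken over $\overline{\F}_q$. The argument above handles singular points over $\overline{\F}_q$, since the implication ``singular $\Rightarrow d^3=27$'' was derived without any rationality assumption. The converse direction uses the point $(1:1:1)$, which is rational, so both readings agree. The remaining equivalences $d_i^3=27\iff u_i^3=v_i^3$ are routine rescaling by $v_i^3\ne0$.
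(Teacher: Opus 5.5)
Your proof is correct and follows the paper's route: both reduce nonsingularity to the criterion $d_i^3\ne27$ and then rescale by $v_i^3$. The paper does this via $27-d_i^3=27(v_i^3-u_i^3)/v_i^3$. The one difference is that you verify the singular-locus criterion by hand in both directions, whereas the paper simply asserts it. The twisted Hessian definition the paper quotes only gives the ``if'' direction, so your version is slightly more self-contained.
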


\begin{proof}
The Hessian equation defines a nonsingular curve if and only if
$27-d_i^3\ne0$.
We have
\begin{equation}
 \begin{aligned}
 27-d_i^3
 &=27-\left(\frac{3u_i}{v_i}\right)^3\\
 &=\frac{27(v_i^3-u_i^3)}{v_i^3}.
 \end{aligned}
\end{equation}
Since $27/v_i^3\ne0$, this is equivalent to $u_i^3\ne v_i^3$,
which is precisely the condition
$(u_i,v_i)\in V_{\mathrm{AGM}}$.
\end{proof}

\begin{theorem}[$3$-isogeny corresponding to a cubic AGM update]
\label{three_iso_agm_theo}
For an AGM update edge $(u_i,v_i)\longrightarrow(u_{i+1},v_{i+1})$, set
\[
 \rho_j:=\frac{u_j}{v_j},\qquad
 H_{\rho_j}:=H(1,3\rho_j)
 \quad(j=i,i+1)
\]
and define
\[
 A_i:=9(\rho_i^2+\rho_i+1),\qquad
 D_i:=3(\rho_i+2),\qquad
 \eta_i:=\frac{3v_{i+1}}{v_i}\in\F_q^\times.
\]
Then
\[
 \eta_i^3=A_i,\qquad \frac{D_i}{\eta_i}=3\rho_{i+1}.
\]
Moreover, the map
\begin{equation}
 \begin{aligned}
 \phi_i:H_{\rho_i}&\longrightarrow H_{\rho_{i+1}},\\
 (X:Y:Z)&\longmapsto
 \bigl(\eta_iXYZ:
 X^2Z+XY^2+YZ^2:
 X^2Y+XZ^2+Y^2Z\bigr)
 \end{aligned}
 \label{agm_hess_eq}
\end{equation}
is a $3$-isogeny.
\end{theorem}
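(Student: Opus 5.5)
The plan is to apply Proposition~\ref{three_iso_prop} with $a=1$, $d=d_i:=3\rho_i$ and $c=1$, and then compose the resulting isogeny with the isomorphism of Proposition~\ref{hess_isom_prop}.

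First, the two identities. From the second update equation,
\[
\eta_i^3=\frac{27v_{i+1}^3}{v_i^3}=\frac{9v_i(u_i^2+u_iv_i+v_i^2)}{v_i^3}=9(\rho_i^2+\rho_i+1)=A_i .
\]
This is nonzero by \eqref{agm_cond_eq}, so $\eta_i\in\F_q^\times$. From the first update equation,
\[
\frac{D_i}{\eta_i}=\frac{3(u_i+2v_i)/v_i}{3v_{i+1}/v_i}=\frac{u_i+2v_i}{v_{i+1}}=\frac{3u_{i+1}}{v_{i+1}}=3\rho_{i+1}.
\]

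Next, the isogeny. With $a=c=1$ and $d=3\rho_i$, Proposition~\ref{three_iso_prop} gives $A=d^2+3d+9=9\rho_i^2+9\rho_i+9=A_i$ and $D=d+6=3\rho_i+6=D_i$. Nonsingularity of $H_{\rho_i}$ holds by Proposition~\ref{hess_cond_theo}. Hence
\[
\phi_{1,d_i,1}:H(1,3\rho_i)\to H(A_i,D_i),\qquad (X:Y:Z)\mapsto(XYZ:\;X^2Z+XY^2+YZ^2:\;X^2Y+XZ^2+Y^2Z)
\]
is a $3$-isogeny over $\F_q$. Since $\eta_i\in\F_q$ satisfies $\eta_i^3=A_i$, Proposition~\ref{hess_isom_prop} supplies the $\F_q$-isomorphism
\[
\nu_{\eta_i}:H(A_i,D_i)\to H(1,D_i/\eta_i)=H(1,3\rho_{i+1})=H_{\rho_{i+1}},\qquad (X:Y:Z)\mapsto(\eta_iX:Y:Z),
\]
which preserves the identity. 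Here we use the uniqueness of cube roots for $q\equiv2\pmod3$, so $\eta_i$ is exactly the $c$ of that proposition. The composite $\nu_{\eta_i}\circ\phi_{1,d_i,1}$ is precisely the map \eqref{agm_hess_eq}. It preserves identities, is defined over $\F_q$, and has degree $3\cdot1=3$, so it is a $3$-isogeny.

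The only real check is that $H(A_i,D_i)$ is a genuine twisted Hessian curve, so that both propositions apply. We need $A_i(27A_i-D_i^3)\ne0$. Proposition~\ref{three_iso_prop} asserts this, but it can also be verified directly. We have $A_i\ne0$ as shown above. Moreover,
\[
27A_i-D_i^3=\eta_i^3\bigl(27-(3\rho_{i+1})^3\bigr),
\]
which is nonzero by Proposition~\ref{hess_cond_theo} applied to $(u_{i+1},v_{i+1})\in V_{\mathrm{AGM}}$. The remainder of the argument is bookkeeping.
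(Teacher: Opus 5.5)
Your proposal is correct and follows essentially the same route as the paper: apply Proposition~\ref{three_iso_prop} with $a=c=1$, $d=3\rho_i$ to get a $3$-isogeny onto $H(A_i,D_i)$, verify $\eta_i^3=A_i$ and $D_i/\eta_i=3\rho_{i+1}$ from the update equations, and compose with the $\F_q$-isomorphism $\nu_{\eta_i}$ of Proposition~\ref{hess_isom_prop}. Your extra direct check that $27A_i-D_i^3=\eta_i^3\bigl(27-(3\rho_{i+1})^3\bigr)\neq0$ is a harmless (and correct) addition.
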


\begin{proof}

By Proposition~\ref{hess_cond_theo}, the curves
$H_{\rho_i}$ and $H_{\rho_{i+1}}$ are nonsingular.
Substituting $a=c=1$ and $d=3\rho_i$
into Proposition~\ref{three_iso_prop} gives
\[
 \begin{aligned}
 A&=(3\rho_i)^2+3(3\rho_i)+9
   =9(\rho_i^2+\rho_i+1)=A_i,\\
 D&=3\rho_i+6=3(\rho_i+2)=D_i.
 \end{aligned}
\]
We thus obtain the $3$-isogeny
\[
 \begin{aligned}
 \psi_i:H_{\rho_i}&\longrightarrow H(A_i,D_i),\\
 (X:Y:Z)&\longmapsto
 \bigl(XYZ:X^2Z+XY^2+YZ^2:X^2Y+XZ^2+Y^2Z\bigr).
 \end{aligned}
\]

Next, we show that $H(A_i,D_i)$ and $H_{\rho_{i+1}}$ are isomorphic.
The cubic AGM update equations give
\[
 \begin{aligned}
 \eta_i^3
 &=\frac{27v_{i+1}^3}{v_i^3}
  =\frac{9(u_i^2+u_iv_i+v_i^2)}{v_i^2}
  =9(\rho_i^2+\rho_i+1)=A_i,\\
 \frac{D_i}{\eta_i}
 &=\frac{3(\rho_i+2)}{3v_{i+1}/v_i}
  =\frac{u_i+2v_i}{v_{i+1}}
  =\frac{3u_{i+1}}{v_{i+1}}=3\rho_{i+1}.
 \end{aligned}
\]
Since $\eta_i\in\F_q^\times$, Proposition~\ref{hess_isom_prop} gives the following
isomorphism over $\F_q$ that preserves the identity element:
\[
 \begin{aligned}
 \nu_{\eta_i}:H(A_i,D_i)&\longrightarrow
 H\!\left(1,\frac{D_i}{\eta_i}\right)=H_{\rho_{i+1}},\\
 (U:V:W)&\longmapsto(\eta_iU:V:W).
 \end{aligned}
\]
The composition $\nu_{\eta_i}\circ\psi_i$ is the map $\phi_i$ in~\eqref{agm_hess_eq}.
Since $\nu_{\eta_i}$ is an isomorphism, $\phi_i$ is a $3$-isogeny.
\end{proof}

\begin{remark}[Reason that $u_i\ne0$ is not required]
Let $v_i\in\F_q^\times$. Then $(0,v_i)\in V_{\mathrm{AGM}}$, and the corresponding
Hessian curve $H(1,0):X^3+Y^3+Z^3=0$ is nonsingular.
Also, $(-2v_i)^3-v_i^3=-9v_i^3\ne0$, so $(-2v_i,v_i)$ is a vertex.
Substituting $u_i=-2v_i$ into the AGM update equations gives
\[
 \begin{aligned}
 u_{i+1}&=\frac{-2v_i+2v_i}{3}=0,\\
 v_{i+1}^3&=\frac{v_i(4v_i^2-2v_i^2+v_i^2)}{3}=v_i^3.
 \end{aligned}
\]
The uniqueness of cube roots gives $v_{i+1}=v_i$. Hence
\[
 (-2v_i,v_i)\longrightarrow(0,v_i)
\]
is an AGM update edge.
If we further restricted $V_{\mathrm{AGM}}$ by requiring
$u_i\ne0$, the resulting set would not be closed under $U$.
\end{remark}

\section{The quotient of the cubic AGM graph and cycle lifting}
We describe AGM cycles using a quotient graph defined
by the ratio $u/v$.

\begin{proposition}[A product description of the AGM vertex set]
\label{agm_rho_bij_theo}
Set $V_{\rho}:=\F_q\setminus\{1\}$.
The maps
\[
 \begin{aligned}
 \Phi &:V_{\mathrm{AGM}}\longrightarrow
        V_{\rho}\times\F_q^\times,
 &\Phi(u,v)&=(u/v,v),\\
 \Psi &:V_{\rho}\times\F_q^\times\longrightarrow
        V_{\mathrm{AGM}},
 &\Psi(\rho,v)&=(\rho v,v)
 \end{aligned}
\]
are well-defined and satisfy
\[
 \Psi\circ\Phi=\operatorname{id}_{V_{\mathrm{AGM}}},
 \qquad
 \Phi\circ\Psi=\operatorname{id}_{V_{\rho}\times\F_q^\times},
\]
where $\operatorname{id}_{V_{\mathrm{AGM}}}$ and
$\operatorname{id}_{V_{\rho}\times\F_q^\times}$ denote
the identity maps on $V_{\mathrm{AGM}}$ and
$V_{\rho}\times\F_q^\times$, respectively.
\end{proposition}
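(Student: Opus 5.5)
The plan is a direct verification in two parts: first that $\Phi$ and $\Psi$ land in the stated codomains, then that the two compositions are identities. The one structural fact needed is that, for $v\neq 0$, the condition $u^3\ne v^3$ is equivalent to $u/v\ne 1$. This uses the uniqueness of cube roots in $\F_q$ when $q\equiv2\pmod3$, already invoked repeatedly in the paper. Concretely, $u^3=v^3$ with $v\ne0$ means $(u/v)^3=1=1^3$, and the uniqueness of cube roots forces $u/v=1$. The converse is trivial.

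\emph{Well-definedness of $\Phi$.} Take $(u,v)\in V_{\mathrm{AGM}}$. Then $v\ne0$, so $u/v\in\F_q$ is defined and $v\in\F_q^\times$. Since $u^3\ne v^3$, the equivalence above gives $u/v\ne1$, so $u/v\in V_\rho$.

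\emph{Well-definedness of $\Psi$.} Take $(\rho,v)\in V_\rho\times\F_q^\times$. Then $v\ne0$, and $(\rho v)^3-v^3=v^3(\rho^3-1)$. This is nonzero because $v\ne0$ and $\rho^3\ne1$, the latter again by uniqueness of cube roots since $\rho\ne1$. Hence $(\rho v,v)\in V_{\mathrm{AGM}}$.

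\emph{The compositions.} These are immediate:
\[
\Psi(\Phi(u,v))=\Psi(u/v,v)=\bigl((u/v)\,v,v\bigr)=(u,v),
\qquad
\Phi(\Psi(\rho,v))=\Phi(\rho v,v)=(\rho v/v,v)=(\rho,v).
\]

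The only step with any content is the equivalence $\rho^3=1\iff\rho=1$ in $\F_q$. It holds because $\gcd(3,q-1)=1$, so cubing is injective on $\F_q^\times$ and $0^3\ne1$. Everything else is routine algebra, so I expect no real obstacle.
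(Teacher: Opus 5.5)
Your proof is correct and follows the paper's argument essentially step for step. You check that each map lands in its codomain, using that $1$ is the only cube root of $1$ in $\F_q$ when $q\equiv2\pmod3$, and then verify both compositions directly. Only the $\Psi$ direction genuinely needs this fact; the $\Phi$ direction is trivial, exactly as in the paper.
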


\begin{proof}
If $(u,v)\in V_{\mathrm{AGM}}$, then $v\ne0$ and $(u/v)^3=u^3/v^3\ne1$.
Thus $u/v\in V_{\rho}$, so $\Phi(u,v)\in V_{\rho}\times\F_q^\times$.

Conversely, let $(\rho,v)\in V_{\rho}\times\F_q^\times$.
Since $q\equiv2\pmod3$, the only cube root of $1$ in $\F_q$ is $1$.
Hence $\rho\ne1$ implies $\rho^3\ne1$, and therefore
\[
 (\rho v)^3-v^3=(\rho^3-1)v^3\ne0.
\]
Together with $v\ne0$, this gives $\Psi(\rho,v)\in V_{\mathrm{AGM}}$.
The two compositions are
\[
 \begin{aligned}
 (\Psi\circ\Phi)(u,v)
 &=\Psi(u/v,v)=\bigl((u/v)v,v\bigr)=(u,v),\\
 (\Phi\circ\Psi)(\rho,v)
 &=\Phi(\rho v,v)=\bigl((\rho v)/v,v\bigr)=(\rho,v),
 \end{aligned}
\]
which proves the claim.
\end{proof}

We use $\Phi$ to represent each $(u,v)\in V_{\mathrm{AGM}}$ by the pair $(\rho, v)$ of the ratio $\rho = u / v$ and the second coordinate $v$.
The corresponding vertex in the original coordinates is $\Psi(\rho,v)=(\rho v,v)$.

For $\rho\in V_{\rho}$, we have
$(\rho-1)(\rho^2+\rho+1)=\rho^3-1\ne0$.
Hence there is a unique $\vartheta(\rho)\in\F_q^\times$ satisfying
\[
 \vartheta(\rho)^3=\frac{\rho^2+\rho+1}{3}.
\]
Substituting $u_i=\rho_i v_i$ into the AGM update gives
\[
 \begin{aligned}
 u_{i+1}&=\frac{\rho_i v_i+2v_i}{3}
          =\frac{\rho_i+2}{3}v_i,\\
 v_{i+1}^3&=\frac{v_i(\rho_i^2v_i^2+\rho_iv_i^2+v_i^2)}{3}
          =\vartheta(\rho_i)^3v_i^3.
 \end{aligned}
\]
Uniqueness of cube roots and $v_i\ne0$ give
\[
 v_{i+1}=\vartheta(\rho_i)v_i,\qquad
 \rho_{i+1}=\frac{(\rho_i+2)v_i}{3\vartheta(\rho_i)v_i}
           =\frac{\rho_i+2}{3\vartheta(\rho_i)}.
\]
Set
$f(\rho):=\frac{\rho+2}{3\vartheta(\rho)}\,\,(\rho\in V_{\rho}).$
The preceding calculation shows that an AGM update sends
a vertex with ratio $\rho$ to a vertex with ratio $f(\rho)$.
Since the updated vertex belongs to $V_{\mathrm{AGM}}$,
we have $f(\rho)\in V_{\rho}$.
We call $f:V_{\rho}\to V_{\rho}$ the ratio update map.
The cubic AGM update in these coordinates is
$T:=\Phi\circ U\circ\Psi$.
The preceding calculation gives
\begin{equation}
 T:V_{\rho}\times\F_q^\times
 \longrightarrow V_{\rho}\times\F_q^\times,
 \qquad
 T(\rho,v)=\bigl(f(\rho),\vartheta(\rho)v\bigr).
 \label{agm_scalar_up_eq}
\end{equation}
In particular,
\[
 U\circ\Psi=\Psi\circ T.
\]

\begin{definition}[$\rho$-graph]
The digraph $G_{\rho}:=(V_{\rho},E_{\rho})$ is called the $\rho$-graph, where
\[
 E_{\rho}:=\{(\rho,f(\rho))\mid\rho\in V_{\rho}\}.
\]
\end{definition}

Two vertices $(u,v)$ and $(\widetilde u,\widetilde v)$
with the same ratio are related by scalar multiplication
with $\gamma=\widetilde v/v\in\F_q^\times$:
$(\widetilde u,\widetilde v)=(\gamma u,\gamma v)$.
By~\eqref{agm_scalar_up_eq}, the ratio after an AGM update
is $f(\rho)$, which depends only on the initial ratio $\rho$.
Thus $G_{\rho}$ is the quotient of $G_{\mathrm{AGM}}$
obtained by identifying vertices with the same ratio.

\begin{theorem}[Bijectivity of $f$]
\label{rho_up_theo}
The map $f:V_{\rho}\to V_{\rho}$ is bijective.
\end{theorem}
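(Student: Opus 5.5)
Since $V_{\rho}$ is finite, it is enough to prove that $f$ is surjective (or injective). The plan is to deduce surjectivity from the bijectivity of $U$ (Corollary~\ref{agm_cyc_coro}) via the conjugation $T=\Phi\circ U\circ\Psi$.

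First, since $\Phi$ and $\Psi$ are mutually inverse bijections by Proposition~\ref{agm_rho_bij_theo}, the map $T=\Phi\circ U\circ\Psi$ is a bijection of $V_{\rho}\times\F_q^\times$. Its inverse is $\Phi\circ U^{-1}\circ\Psi$, where $U^{-1}$ exists by Theorem~\ref{uni_parent_theo}.

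Next, take any $\sigma\in V_{\rho}$. Consider the element $(\sigma,1)\in V_{\rho}\times\F_q^\times$. Surjectivity of $T$ gives some $(\rho,v)$ with $T(\rho,v)=(\sigma,1)$. By formula~\eqref{agm_scalar_up_eq}, the first component of $T(\rho,v)$ is $f(\rho)$, so $f(\rho)=\sigma$. Hence $f$ is surjective. Since $V_{\rho}$ is finite, $f$ is also injective, and therefore bijective.

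Alternatively, I could give $f^{-1}$ explicitly by projecting the parent formula~\eqref{parent_formula_eq}. Take the vertex $(\sigma,1)$, i.e.\ $(u_{i+1},v_{i+1})=(\sigma,1)$, and let $\xi^3=1-\sigma^3$. Its parent has ratio $(\sigma-2\xi)/(\sigma+\xi)$. Here $\sigma+\xi\neq0$ holds because the parent lies in $V_{\mathrm{AGM}}$. This ratio is then $f^{-1}(\sigma)$, and independence of the chosen scaling follows from homogeneity.

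There is no real obstacle. The only point needing care is checking that the first coordinate of $T$ depends only on $\rho$, which is exactly the content of~\eqref{agm_scalar_up_eq}.
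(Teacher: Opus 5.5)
Your proof is correct and is essentially the paper's argument. The paper takes the parent of the vertex $(\eta,1)$ supplied by Theorem~\ref{uni_parent_theo} and reads off $f(\rho)=\eta$ from the first coordinate of $T$, which is your surjectivity-of-$T$ step written without naming $T^{-1}$; finiteness of $V_{\rho}$ then gives bijectivity, exactly as you conclude.
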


\begin{proof}
For any $\eta\in V_{\rho}$, Theorem~\ref{uni_parent_theo} gives
a parent $(u,v)\in V_{\mathrm{AGM}}$ of $(\eta,1)\in V_{\mathrm{AGM}}$.
Set $\rho:=u/v\in V_{\rho}$.
In the ratio and second coordinate, this update edge is expressed as
\[
 T(\rho,v)=\bigl(f(\rho),\vartheta(\rho)v\bigr)=(\eta,1)
\]
by~\eqref{agm_scalar_up_eq}. Comparing the first coordinates gives $f(\rho)=\eta$.
Since $\eta$ was arbitrary, $f$ is surjective. Then, as $V_{\rho}$ is finite, $f$ is a bijection.
\end{proof}

\begin{corollary}[The inverse of the AGM update]
\label{three_agm_inv_theo}
The map $T$ in~\eqref{agm_scalar_up_eq} is bijective.
For $(\rho,v)\in V_{\rho}\times\F_q^\times$, set $\sigma:=f^{-1}(\rho)$.
Then
\begin{equation}
 T^{-1}(\rho,v)=\left(\sigma,\frac{v}{\vartheta(\sigma)}\right).
 \label{agm_scalar_inv_eq}
\end{equation}
\end{corollary}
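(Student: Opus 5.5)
Bijectivity of $T$ follows directly from earlier results. By definition $T=\Phi\circ U\circ\Psi$. Proposition~\ref{agm_rho_bij_theo} shows that $\Phi$ and $\Psi$ are mutually inverse bijections, and Corollary~\ref{agm_cyc_coro}, which rests on Theorem~\ref{uni_parent_theo}, shows that $U$ is a bijection of $V_{\mathrm{AGM}}$. A composition of bijections is a bijection, so $T$ is bijective. Equivalently, $T^{-1}=\Phi\circ U^{-1}\circ\Psi$. I would nevertheless derive the explicit formula~\eqref{agm_scalar_inv_eq} directly from~\eqref{agm_scalar_up_eq}, since that is cleaner than unwinding the parent formula~\eqref{parent_formula_eq}.

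Fix $(\rho,v)\in V_{\rho}\times\F_q^\times$. By Theorem~\ref{rho_up_theo}, $f$ is a bijection of $V_{\rho}$, so $\sigma:=f^{-1}(\rho)\in V_{\rho}$ is well defined. Since $\vartheta(\sigma)\in\F_q^\times$, the element $v/\vartheta(\sigma)$ lies in $\F_q^\times$. Hence the candidate point $(\sigma,v/\vartheta(\sigma))$ belongs to $V_{\rho}\times\F_q^\times$. Applying~\eqref{agm_scalar_up_eq} gives
\[
 T\bigl(\sigma,v/\vartheta(\sigma)\bigr)=\bigl(f(\sigma),\vartheta(\sigma)\cdot v/\vartheta(\sigma)\bigr)=(\rho,v).
\]
So this point is a preimage of $(\rho,v)$, and by injectivity of $T$ it is the unique one. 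This establishes~\eqref{agm_scalar_inv_eq}.

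Alternatively, one can prove injectivity of $T$ directly, without appealing to $U$. Suppose $T(\rho,v)=T(\rho',v')$. Comparing first coordinates gives $f(\rho)=f(\rho')$, so $\rho=\rho'$ by Theorem~\ref{rho_up_theo}. Then comparing second coordinates gives $\vartheta(\rho)v=\vartheta(\rho)v'$, and since $\vartheta(\rho)\ne0$ we get $v=v'$. Combined with the surjectivity computation above, this gives bijectivity.

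There is no real obstacle. The only points to check are that $\sigma$ lies in $V_{\rho}$ and that $\vartheta(\sigma)$ is nonzero; both are already established.
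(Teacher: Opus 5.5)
Your proof is correct and follows the paper's argument essentially verbatim: you obtain bijectivity of $T=\Phi\circ U\circ\Psi$ as a composition of bijections, then verify $T\bigl(\sigma,v/\vartheta(\sigma)\bigr)=(\rho,v)$ directly from~\eqref{agm_scalar_up_eq}. Your alternative injectivity argument is also valid, but it does not genuinely avoid $U$, since the bijectivity of $f$ in Theorem~\ref{rho_up_theo} is itself derived from the parent formula of Theorem~\ref{uni_parent_theo}.
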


\begin{proof}
The map $U$ is bijective by Theorem~\ref{uni_parent_theo}.
The maps $\Phi$ and $\Psi$ are bijective
by Proposition~\ref{agm_rho_bij_theo}.
Thus $T=\Phi\circ U\circ\Psi$ is bijective.

Let $(\rho,v)\in V_{\rho}\times\F_q^\times$ and
$\sigma=f^{-1}(\rho)$.
Since $v,\vartheta(\sigma)\ne0$, the pair
$\bigl(\sigma,v/\vartheta(\sigma)\bigr)$ belongs to
$V_{\rho}\times\F_q^\times$.
Moreover,
\[
 \begin{aligned}
 T\left(\sigma,\frac{v}{\vartheta(\sigma)}\right)
 &=\left(
     f(\sigma),
     \vartheta(\sigma)\frac{v}{\vartheta(\sigma)}
   \right)\\
 &=(\rho,v).
 \end{aligned}
\]
\end{proof}

\begin{proposition}[The AGM update commutes with scalar multiplication]
For $\gamma\in\F_q^\times$, define
\[
 S_\gamma:V_{\rho}\times\F_q^\times
 \longrightarrow V_{\rho}\times\F_q^\times,
 \qquad
 S_\gamma(\rho,v):=(\rho,\gamma v).
\]
Under $\Psi$, this map corresponds to scalar multiplication
$(u,v)\mapsto(\gamma u,\gamma v)$.
Moreover,
\[
 T\circ S_\gamma=S_\gamma\circ T.
\]
\end{proposition}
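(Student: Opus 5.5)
The plan is to prove both assertions by unwinding the definitions of $\Psi$, $S_\gamma$ and $T$. For the first assertion, take $(\rho,v)\in V_{\rho}\times\F_q^\times$ and compute
\[
 \Psi(S_\gamma(\rho,v))=\Psi(\rho,\gamma v)=(\rho\gamma v,\gamma v)=(\gamma\cdot\rho v,\gamma\cdot v).
\]
This is the scalar multiple by $\gamma$ of $\Psi(\rho,v)=(\rho v,v)$. Since $\Psi$ is a bijection with inverse $\Phi$ by Proposition~\ref{agm_rho_bij_theo}, we get $\Psi\circ S_\gamma\circ\Phi=M_\gamma$ on $V_{\mathrm{AGM}}$, where $M_\gamma(u,v)=(\gamma u,\gamma v)$. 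I would also note in passing that $M_\gamma$ preserves $V_{\mathrm{AGM}}$: we have $\gamma v\ne0$ and $(\gamma u)^3-(\gamma v)^3=\gamma^3(u^3-v^3)\ne0$. This is automatic from the conjugation, but it reassures that the statement is well-posed.

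For the commutation, I will use the explicit formula \eqref{agm_scalar_up_eq}, $T(\rho,v)=(f(\rho),\vartheta(\rho)v)$. The key observation is that both $f$ and $\vartheta$ depend only on $\rho$, not on $v$. Then the two compositions are
\[
 T(S_\gamma(\rho,v))=T(\rho,\gamma v)=\bigl(f(\rho),\vartheta(\rho)\gamma v\bigr),
\]
\[
 S_\gamma(T(\rho,v))=\bigl(f(\rho),\gamma\vartheta(\rho)v\bigr).
\]
Multiplication in $\F_q$ is commutative, so these agree.

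There is no real obstacle. The only point to be careful about is that \eqref{agm_scalar_up_eq} was derived using uniqueness of cube roots, which is what makes $\vartheta(\rho)$ independent of $v$. That derivation is already done before the definition of $T$, so I simply invoke it. Alternatively, one could argue directly in the $(u,v)$ coordinates: the update equations are homogeneous of degree $1$ in $u,v$ (after taking the unique cube root, since $(\gamma v_{i+1})^3=\gamma^3v_{i+1}^3$). This shows $U\circ M_\gamma=M_\gamma\circ U$, and conjugating by $\Psi$ gives the same identity for $T$.
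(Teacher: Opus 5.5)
Your proposal is correct and follows the paper's proof essentially verbatim: you show $\Psi(S_\gamma(\rho,v))=\gamma\Psi(\rho,v)$, then compare $T\circ S_\gamma$ and $S_\gamma\circ T$ using \eqref{agm_scalar_up_eq} and commutativity of multiplication in $\F_q$. Your side remarks are also valid additions not in the paper: that scaling by $\gamma$ preserves $V_{\mathrm{AGM}}$, and the alternative homogeneity argument in $(u,v)$ coordinates via uniqueness of cube roots.
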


\begin{proof}
For every $(\rho,v)\in V_{\rho}\times\F_q^\times$, we have
\[
 \begin{aligned}
 \Psi\bigl(S_\gamma(\rho,v)\bigr)
 &=\Psi(\rho,\gamma v)\\
 &=(\rho\gamma v,\gamma v)\\
 &=\gamma(\rho v,v)
 =\gamma\Psi(\rho,v).
 \end{aligned}
\]
This proves the correspondence with scalar multiplication.
Also,
\[
 \begin{aligned}
 T\bigl(S_\gamma(\rho,v)\bigr)
 &=\bigl(f(\rho),\vartheta(\rho)\gamma v\bigr),\\
 S_\gamma\bigl(T(\rho,v)\bigr)
 &=\bigl(f(\rho),\gamma\vartheta(\rho)v\bigr).
 \end{aligned}
\]
These expressions agree by commutativity of multiplication.
\end{proof}

For a finite digraph $G$, let $\operatorname{Cyc}(G)$ denote the set of its simple directed cycles.
Cycles that differ only in their starting vertex are identified, and loops are included as cycles of length $1$.

\begin{theorem}[The quotient by ratios and cycle lifting]
\label{rho_cover_theo}
For the map on vertices in the original coordinates,
\[
 \pi:V_{\mathrm{AGM}}\longrightarrow V_{\rho},\qquad
 (u,v)\longmapsto u/v,
\]
the following hold.
\begin{enumerate}
 \item By sending each AGM update edge $(u,v)\to(u',v')$ to
 $\pi(u,v)\to\pi(u',v')$, the map $\pi$ defines a $(q-1)$-fold covering of digraphs
 $\pi:G_{\mathrm{AGM}}\to G_{\rho}$.
 For each $\rho\in V_{\rho}$, its fiber is
 \[
  \pi^{-1}(\rho)=\{(\rho v,v)\mid v\in\F_q^\times\}.
 \]
 Here $(q-1)$-fold means that each vertex has exactly $q-1$ vertices in its fiber.
 \item For a cycle
 $C=(\rho_0\to\rho_1\to\cdots\to\rho_{L-1}\to\rho_0)\in\operatorname{Cyc}(G_{\rho})$
 of length $L$, put
 \[
  \kappa_C:=\prod_{i=0}^{L-1}\vartheta(\rho_i),\qquad
  m_C:=\ord_{\F_q^\times}(\kappa_C).
 \]
 Here $m_C$ is the least positive integer satisfying $\kappa_C^{m_C}=1$.
 The multiplier $\kappa_C$ does not depend on the starting vertex of $C$.
 The subdigraph $\pi^{-1}(C)$ consisting of the preimages of the vertices and edges of $C$
 is a disjoint union of $(q-1)/m_C$ cycles of length $Lm_C$.
\end{enumerate}
\end{theorem}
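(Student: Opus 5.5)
Everything will be done in the coordinates $(\rho,v)$ provided by Proposition~\ref{agm_rho_bij_theo}. There $\Phi$ identifies $V_{\mathrm{AGM}}$ with $V_{\rho}\times\F_q^\times$, the update $U$ becomes $T(\rho,v)=(f(\rho),\vartheta(\rho)v)$ by~\eqref{agm_scalar_up_eq}, and $\pi$ becomes the first projection.

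\emph{Part 1.} The fiber description is immediate from $\Psi$: a vertex has ratio $\rho$ exactly when it equals $(\rho v,v)$ for some $v\in\F_q^\times$, and distinct $v$ give distinct vertices. Hence each fiber has $q-1$ elements. To see that $\pi$ is a morphism of digraphs, note that every AGM edge $(u,v)\to U(u,v)$ maps to $\pi(u,v)\to f(\pi(u,v))$, which is an edge of $G_{\rho}$ by the computation preceding~\eqref{agm_scalar_up_eq}. For the covering property, fix a vertex $(\rho,v)$. Its unique outgoing edge goes to $(f(\rho),\vartheta(\rho)v)$, and this edge is the unique lift of the unique outgoing edge $\rho\to f(\rho)$. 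Its unique incoming edge comes from $T^{-1}(\rho,v)$, given by Corollary~\ref{three_agm_inv_theo}, and this lifts the unique incoming edge $f^{-1}(\rho)\to\rho$ supplied by Theorem~\ref{rho_up_theo}. So $\pi$ is a bijection on in- and out-edges at every vertex, which is the covering property.

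\emph{Part 2.} First, $\kappa_C$ is independent of the starting vertex because a cyclic shift of the $\rho_i$ only permutes the factors of a commutative product. Next, by induction on $k$ using~\eqref{agm_scalar_up_eq}, for $0\le k\le L$ we get
\[
T^k(\rho_0,v)=\Bigl(\rho_k,\ \textstyle\prod_{i=0}^{k-1}\vartheta(\rho_i)\,v\Bigr),
\]
with indices taken mod $L$. In particular $T^L(\rho_0,v)=(\rho_0,\kappa_C v)$, so $T^{L}$ restricted to the fiber over $\rho_0$ is the scalar map $S_{\kappa_C}$, and $T^{nL}(\rho_0,v)=(\rho_0,\kappa_C^n v)$.

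The subdigraph $\pi^{-1}(C)$ is $T$-stable, since $T$ maps the fiber over $\rho_k$ into the fiber over $\rho_{k+1}$. Since $T$ is a bijection, $\pi^{-1}(C)$ is a disjoint union of $T$-orbits. Every orbit meets the fiber over $\rho_0$.

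Now compute the length of the orbit of $(\rho_0,v)$. Suppose $T^n(\rho_0,v)=(\rho_0,v)$. The first coordinate forces $L\mid n$, because $\rho_0,\dots,\rho_{L-1}$ are distinct in a simple cycle. Writing $n=Ln'$, the second coordinate gives $\kappa_C^{n'}v=v$, which holds if and only if $m_C\mid n'$. Hence every orbit has length exactly $Lm_C$.

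Each orbit meets the fiber over $\rho_0$ in exactly the coset $v\langle\kappa_C\rangle$, which has $m_C$ elements. These cosets partition $\F_q^\times$, so there are $(q-1)/m_C$ orbits. As a check, the orbits then contain $(q-1)/m_C\cdot Lm_C=L(q-1)$ vertices in total, which is $\#\pi^{-1}(C)$.

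The only delicate point is the bookkeeping that an orbit closes up only at multiples of $L$. This is where the distinctness of the $\rho_i$ is used. The rest is direct computation.
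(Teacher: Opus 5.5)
Your proof is correct and follows essentially the same route as the paper: you pass to the $(\rho,v)$ coordinates via $\Phi,\Psi$ and use $\pi\circ U=f\circ\pi$ together with bijectivity of $U$ and $f$ for the covering property. You then compute $T^{kL}(\rho_0,v)=(\rho_0,\kappa_C^k v)$ to get least period $Lm_C$, and the only cosmetic difference is that you count the lifted cycles via the cosets $v\langle\kappa_C\rangle$ in the fiber over $\rho_0$, whereas the paper divides the vertex count $L(q-1)$ by $Lm_C$ (which you also give as a check).
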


\begin{proof}
We first prove Part~1.
By Proposition~\ref{agm_rho_bij_theo}, every AGM vertex
has a unique expression $\Psi(\rho,v)=(\rho v,v)$.
Since
\[
 \pi\bigl(\Psi(\rho,v)\bigr)=\pi(\rho v,v)=\rho,
\]
the map $v\mapsto(\rho v,v)$ is a bijection from
$\F_q^\times$ to $\pi^{-1}(\rho)$.
Thus
\[
 \pi^{-1}(\rho)=\{(\rho v,v)\mid v\in\F_q^\times\},
\]
and every fiber has $q-1$ vertices.
In particular, $\pi$ is surjective.

Let $x=\Psi(\rho,v)$.
Using $U\circ\Psi=\Psi\circ T$
and~\eqref{agm_scalar_up_eq}, we obtain
\[
 \begin{aligned}
 \pi\bigl(U(x)\bigr)
 &=\pi\bigl(\Psi(f(\rho),\vartheta(\rho)v)\bigr)\\
 &=f(\rho)
 =f\bigl(\pi(x)\bigr).
 \end{aligned}
\]
Hence $\pi\circ U=f\circ\pi$.

The maps $U$ and $f$ are bijective by
Theorems~\ref{uni_parent_theo} and~\ref{rho_up_theo}.
For each $x\in V_{\mathrm{AGM}}$, the unique outgoing
edge at $x$ is $x\to U(x)$.
The equality above shows that its image is
$\pi(x)\longrightarrow f(\pi(x))$,
the unique outgoing edge at $\pi(x)$ in $G_{\rho}$.
Thus $\pi$ maps the outgoing edge set at $x$
bijectively onto the outgoing edge set at $\pi(x)$.

For incoming edges, applying the same equality to
$U^{-1}(x)$ gives
\[
 \begin{aligned}
 f\bigl(\pi(U^{-1}(x))\bigr)
 &=\pi\bigl(U(U^{-1}(x))\bigr)\\
 &=\pi(x).
 \end{aligned}
\]
Since $f$ is bijective, this implies
\[
 \pi\bigl(U^{-1}(x)\bigr)=f^{-1}\bigl(\pi(x)\bigr).
\]
Therefore the unique incoming edge $U^{-1}(x)\to x$
is mapped to $f^{-1}(\pi(x))\longrightarrow\pi(x)$,
the unique incoming edge at $\pi(x)$ in $G_{\rho}$.
Thus $\pi$ also maps the incoming edge set at $x$
bijectively onto the incoming edge set at $\pi(x)$.

These two bijections establish the covering property.
Since every fiber has $q-1$ vertices,
$\pi$ is a $(q-1)$-fold covering of digraphs.

We next prove Part~2.
By commutativity of multiplication, $\kappa_C$ does not
depend on the starting vertex of $C$.
Every vertex $x$ of $\pi^{-1}(C)$ has a unique expression
\[
 x=\Psi(\rho_j,v),
 \qquad
 j\in\{0,\ldots,L-1\},\quad v\in\F_q^\times.
\]
For a positive integer $n$, the identity
$U\circ\Psi=\Psi\circ T$ gives
\[
 U^n(x)=\Psi\bigl(T^n(\rho_j,v)\bigr).
\]
Since $\Psi$ is injective,
\[
 U^n(x)=x
 \quad\Longleftrightarrow\quad
 T^n(\rho_j,v)=(\rho_j,v).
\]
By~\eqref{agm_scalar_up_eq}, the first coordinate of
$T^n(\rho_j,v)$ is $f^n(\rho_j)$.
The least period of $\rho_j$ under $f$ is $L$, so
\[
 f^n(\rho_j)=\rho_j
 \quad\Longleftrightarrow\quad
 L\mid n.
\]
Thus $U^n(x)=x$ can hold only when $n=kL$
for some positive integer $k$.

Iterating~\eqref{agm_scalar_up_eq} along $C$ gives
\[
 \begin{aligned}
 T^L(\rho_j,v)
 &=\left(
     \rho_j,
     \left(\prod_{i=0}^{L-1}\vartheta(\rho_i)\right)v
   \right)
 =(\rho_j,\kappa_Cv),\\
 T^{kL}(\rho_j,v)
 &=(\rho_j,\kappa_C^kv)
 \qquad(k\ge1).
 \end{aligned}
\]
Since $v\ne0$,
\[
 T^{kL}(\rho_j,v)=(\rho_j,v)
 \quad\Longleftrightarrow\quad
 \kappa_C^kv=v
 \quad\Longleftrightarrow\quad
 \kappa_C^k=1.
\]
The least positive integer $k$ satisfying this condition
is $m_C$.
Thus every vertex of $\pi^{-1}(C)$ has least period
$Lm_C$ under $U$.
It follows that $\pi^{-1}(C)$ is a disjoint union
of cycles of length $Lm_C$.
By Part~1, this subdigraph has $L(q-1)$ vertices.
Hence the number of cycles is
\[
 \frac{L(q-1)}{Lm_C}=\frac{q-1}{m_C}.
\]
\end{proof}

\section{Lower bounds for the number of cycles}

We derive a lower bound for the number of AGM cycles using the counting formula in Proposition~\ref{MW_prop}.

Let $d_3(q):=\#\operatorname{Cyc}(G_{\mathrm{AGM}}), c_{\rho}(q):=\#\operatorname{Cyc}(G_{\rho})$
be the numbers of cycles in the AGM graph and the $\rho$-graph, respectively.
Let $M_q^{\mathrm{Hess}}$ be the number defined in Proposition~\ref{MW_prop}.

\begin{samepage}
\begin{lemma}[Counting cycles and Hessian curves]
\label{cyc_count_lem}
For each $C\in\operatorname{Cyc}(G_{\rho})$, let $m_C$ be as in Theorem~\ref{rho_cover_theo}.
Then
\begin{equation}
 d_3(q)
 =\sum_{C\in\operatorname{Cyc}(G_{\rho})}\frac{q-1}{m_C}
 \ge c_{\rho}(q)
 \ge M_q^{\mathrm{Hess}}.
\end{equation}
\end{lemma}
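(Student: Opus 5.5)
The plan is to prove the equality and the two inequalities separately: the equality by counting fibres through Theorem~\ref{rho_cover_theo}, and the last inequality by mapping cycles of $G_{\rho}$ onto isogeny classes via Theorem~\ref{three_iso_agm_theo}.

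\emph{The equality.} By Theorem~\ref{rho_up_theo}, $f$ is a bijection of the finite set $V_{\rho}$. Hence $G_{\rho}$ is a disjoint union of directed cycles, and every $\rho\in V_{\rho}$ lies on exactly one $C\in\operatorname{Cyc}(G_{\rho})$. The vertex set $V_{\mathrm{AGM}}$ is therefore the disjoint union of the subdigraphs $\pi^{-1}(C)$ for $C\in\operatorname{Cyc}(G_{\rho})$. Each of these is closed under $U$, because $\pi\circ U=f\circ\pi$ (shown in the proof of Theorem~\ref{rho_cover_theo}). So every AGM cycle lies in exactly one $\pi^{-1}(C)$. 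Part~2 of Theorem~\ref{rho_cover_theo} says that $\pi^{-1}(C)$ contains exactly $(q-1)/m_C$ cycles, and summing over $C$ gives the equality.

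\emph{The first inequality.} Since $m_C$ is the order of $\kappa_C$ in $\F_q^\times$, it divides $q-1$. Hence each summand $(q-1)/m_C$ is a positive integer, at least $1$, and the sum is at least $c_{\rho}(q)$.

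\emph{The second inequality.} This is the substantive step. Consider the map from $V_{\rho}$ to the set of $\F_q$-isogeny classes of Hessian curves sending $\rho\mapsto[H_{\rho}]$, where $H_{\rho}=H(1,3\rho)$.
\begin{enumerate}
\item The map is well defined: for $\rho\in V_{\rho}$ we have $\rho^3\ne1$, so $27-(3\rho)^3\ne0$ and $H_{\rho}$ is nonsingular (Proposition~\ref{hess_cond_theo}).
\item The map is surjective. Any Hessian curve over $\F_q$ is $H(1,d)$ with $d^3\ne27$. Set $\rho:=d/3$. Since the only cube root of $1$ in $\F_q$ is $1$, we get $\rho\ne1$, so $\rho\in V_{\rho}$ and $H(1,d)=H_{\rho}$.
\item The map is constant along every edge $\rho\to f(\rho)$ of $G_{\rho}$. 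Pick any lift $(\rho v,v)\in\pi^{-1}(\rho)$. Its AGM update edge goes to a vertex with ratio $f(\rho)$. Theorem~\ref{three_iso_agm_theo} then gives a $3$-isogeny $H_{\rho}\to H_{f(\rho)}$ defined over $\F_q$, so the two curves lie in the same $\F_q$-isogeny class.
\end{enumerate}
Being isogenous over $\F_q$ is an equivalence relation, using dual isogenies for symmetry. Following edges around a cycle $C$ therefore shows that all $H_{\rho}$ with $\rho\in C$ lie in one isogeny class. The map thus factors through a surjection $\operatorname{Cyc}(G_{\rho})\to\{\text{isogeny classes of Hessian curves}\}$. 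Proposition~\ref{MW_prop} gives the number of such classes as $M_q^{\mathrm{Hess}}$, so $c_{\rho}(q)\ge M_q^{\mathrm{Hess}}$.

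The main obstacle is making sure our notion of ``Hessian curve'' matches the one counted by Moody--Wu in Proposition~\ref{MW_prop}. If their count includes twisted Hessian curves $H(a,d)$, or all curves isomorphic to Hessian ones, Proposition~\ref{hess_isom_prop} handles this. Since $q\equiv2\pmod3$, every $H(a,d)$ is $\F_q$-isomorphic to $H(1,d/c)$, so every class they count still contains some $H_{\rho}$ and surjectivity is preserved. We must also confirm that their isogeny classes are taken over $\F_q$, matching the $\F_q$-rational isogenies supplied by Theorem~\ref{three_iso_agm_theo}.
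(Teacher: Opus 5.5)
Your proof is correct and follows the paper's argument. The equality and the first inequality come from partitioning $G_{\mathrm{AGM}}$ into the fibres $\pi^{-1}(C)$ and using $m_C\mid(q-1)$. Your surjection $\operatorname{Cyc}(G_{\rho})\to\{\text{isogeny classes}\}$ is the same idea as the paper's, which picks $M$ pairwise non-isogenous curves $H(1,d_j)$ whose ratios $d_j/3$ must then lie on distinct cycles. The appeal to dual isogenies is harmless but unnecessary, since along a directed cycle you can always compose $3$-isogenies forward from any vertex to any other.
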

\end{samepage}

\begin{proof}
By Theorem~\ref{rho_up_theo}, $G_{\rho}$ is a disjoint union of directed cycles.
Hence the inverse images $\pi^{-1}(C)$ of these cycles partition $G_{\mathrm{AGM}}$.
By Part~2 of Theorem~\ref{rho_cover_theo}, each inverse image consists of $(q-1)/m_C$ AGM cycles.
Also, $m_C$ is the order of an element of $\F_q^\times$, so $m_C\mid(q-1)$.
Therefore
\[
 d_3(q)
 =\sum_{C\in\operatorname{Cyc}(G_{\rho})}\frac{q-1}{m_C}
 \ge\sum_{C\in\operatorname{Cyc}(G_{\rho})}1
 =c_{\rho}(q).
\]

Next, set $M:=M_q^{\mathrm{Hess}}$.
By definition, we can choose $M$ Hessian curves $H(1,d_1),\ldots,H(1,d_M)$
that are pairwise non-isogenous over $\F_q$.
For each $j=1,\ldots,M$, set $\rho_j:=d_j/3$.
Since $H(1,d_j)$ is nonsingular, we have $d_j^3\ne27$, and hence
\[
 \rho_j^3-1
 =\left(\frac{d_j}{3}\right)^3-1
 =\frac{d_j^3-27}{27}\ne0.
\]
Thus $\rho_j\ne1$, so $\rho_j\in V_{\rho}$.
We can therefore choose the cycle $C_j$ containing this vertex.

By Part~1 of Theorem~\ref{rho_cover_theo}
and Theorem~\ref{three_iso_agm_theo},
the Hessian curves corresponding to the endpoints of
each edge of $G_{\rho}$ are connected by a $3$-isogeny
defined over $\F_q$.
If $C_i=C_j$ for some $i\ne j$, composing the isogenies along this cycle gives an isogeny
\[
 H(1,d_i)=H_{\rho_i}\longrightarrow H_{\rho_j}=H(1,d_j)
\]
defined over $\F_q$. This contradicts the choice of the curves.
Thus $C_1,\ldots,C_M$ are distinct, and we obtain
\[
 c_{\rho}(q)\ge M=M_q^{\mathrm{Hess}}.
\]
\end{proof}

\begin{lemma}[A lower bound from the counting formula]
\label{agm_cyc_low_lem}
We have
\begin{equation}
 d_3(q)\ge
 1+2\left\lfloor\frac{2\sqrt q}{3}\right\rfloor
 -2\left\lfloor\frac{2\sqrt q}{3p}\right\rfloor.
 \label{agm_cyc_low_eq}
\end{equation}
\end{lemma}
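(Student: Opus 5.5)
This lemma follows directly by chaining the two results already established. By Lemma~\ref{cyc_count_lem} we have the inequality chain $d_3(q)\ge c_{\rho}(q)\ge M_q^{\mathrm{Hess}}$. The standing assumption of this section is $q\equiv2\pmod3$ with $p\ge5$, so Proposition~\ref{MW_prop} applies. It gives the explicit value
\[
 M_q^{\mathrm{Hess}}=1+2\left\lfloor\frac{2\sqrt q}{3}\right\rfloor-2\left\lfloor\frac{2\sqrt q}{3p}\right\rfloor .
\]
Substituting this expression into the inequality $d_3(q)\ge M_q^{\mathrm{Hess}}$ yields~\eqref{agm_cyc_low_eq} immediately.

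The plan therefore has just two steps. First, cite Lemma~\ref{cyc_count_lem} for $d_3(q)\ge M_q^{\mathrm{Hess}}$. Second, invoke Proposition~\ref{MW_prop} to replace $M_q^{\mathrm{Hess}}$ by its closed form. No additional computation is needed, since all the content lies in the earlier lemma: the lifting count $d_3(q)=\sum_C (q-1)/m_C$ and the injection from non-isogenous Hessian curves into cycles of $G_\rho$.

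There is no real obstacle here. The only point to check is that the hypotheses of Proposition~\ref{MW_prop} match the paper's standing assumptions, and they do. If anything deserved a second look, it would be whether "isogeny classes of Hessian curves" in Moody--Wu means classes of curves of the form $H(1,d)$ over $\F_q$. That is exactly what Lemma~\ref{cyc_count_lem} uses when it picks representatives $H(1,d_1),\dots,H(1,d_M)$, and that step is already settled there.
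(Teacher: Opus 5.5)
Your proposal is correct and is exactly the paper's argument. Lemma~\ref{cyc_count_lem} gives $d_3(q)\ge M_q^{\mathrm{Hess}}$, and substituting the closed form from Proposition~\ref{MW_prop}, which applies under the standing hypotheses $q\equiv2\pmod3$ and $p\ge5$, yields the stated bound.
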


\begin{proof}
By Lemma~\ref{cyc_count_lem}, $d_3(q)\ge M_q^{\mathrm{Hess}}$.
Substituting the counting formula~\eqref{count_eq} from Proposition~\ref{MW_prop}
on the right gives the result.
\end{proof}

\begin{theorem}[A lower bound for the number of cubic AGM cycles]
\label{cyc_low_theo}
We have
\[
 d_3(q)\ge\frac{16}{15}\sqrt q-1.
\]
\end{theorem}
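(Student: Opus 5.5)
The plan is to start from Lemma~\ref{agm_cyc_low_lem}, which already gives
\[
 d_3(q)\ge 1+2\left\lfloor\frac{2\sqrt q}{3}\right\rfloor-2\left\lfloor\frac{2\sqrt q}{3p}\right\rfloor.
\]
It then remains to bound the right-hand side below by an expression with no floor functions. Everything beyond that is elementary real inequalities.

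For the first floor I will use $\lfloor x\rfloor\ge x-1$, which gives
\[
 2\left\lfloor\frac{2\sqrt q}{3}\right\rfloor\ge\frac{4\sqrt q}{3}-2.
\]
For the second floor, which enters with a minus sign, I will use $\lfloor y\rfloor\le y$, which gives
\[
 -2\left\lfloor\frac{2\sqrt q}{3p}\right\rfloor\ge-\frac{4\sqrt q}{3p}.
\]
Combining the two estimates yields
\[
 d_3(q)\ge 1+\frac{4\sqrt q}{3}-2-\frac{4\sqrt q}{3p}=\frac{4\sqrt q}{3}\left(1-\frac1p\right)-1.
\]

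Finally, I will invoke the standing assumption $p\ge5$ from Section~3. It gives $1-1/p\ge 4/5$, hence
\[
 \frac{4}{3}\left(1-\frac1p\right)\ge\frac{4}{3}\cdot\frac45=\frac{16}{15},
\]
and so $d_3(q)\ge\frac{16}{15}\sqrt q-1$.

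I do not expect a real obstacle. The only point needing care is choosing the direction of the floor estimate for each term according to its sign, and noting that the constant $16/15$ comes from the worst case $p=5$. The bound is tight in the sense that the constant cannot be improved by this method when $p=5$.
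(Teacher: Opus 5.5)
Your proposal is correct and matches the paper's proof exactly. Like the paper, you start from Lemma~\ref{agm_cyc_low_lem}, apply $\lfloor x\rfloor\ge x-1$ to the positively signed floor and $\lfloor y\rfloor\le y$ to the negatively signed one, and then use $p\ge5$ to get $\frac43\left(1-\frac1p\right)\ge\frac{16}{15}$.
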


\begin{proof}
Applying the inequalities $x-1\le\lfloor x\rfloor\le x$ and $p\ge5$
to~\eqref{agm_cyc_low_eq} in Lemma~\ref{agm_cyc_low_lem}, we obtain
\[
 \begin{aligned}
 d_3(q)
 &\ge1+2\left(\frac{2\sqrt q}{3}-1\right)-\frac{4\sqrt q}{3p}\\
 &=\frac{4}{3}\left(1-\frac1p\right)\sqrt q-1\\
 &\ge\frac{16}{15}\sqrt q-1.
 \end{aligned}
\]
\end{proof}

\paragraph*{Acknowledgements.}
This work was supported by JSPS KAKENHI Grant Numbers JP24K17281 and JP25K14994, Japan.

\bibliographystyle{abbrvurl}
\bibliography{references}

\end{document}